\documentclass[11pt]{amsart}

\usepackage{amssymb,amsfonts,latexsym,amsmath,mathrsfs,enumerate}

\usepackage{amssymb,amsfonts,latexsym,amsmath,mathrsfs,enumerate}
\usepackage{amsfonts}
\usepackage{latexsym,marvosym}

\DeclareMathOperator{\dist}{dist}
\DeclareMathOperator{\diam}{diam}
\DeclareMathOperator{\supp}{supp}

\newcommand{\eps}{\varepsilon}

\renewcommand{\phi}{\varphi}

\newcommand{\C}{\mathbb C}
\newcommand{\N}{\mathbb N}
\newcommand{\R}{\mathbb R}

\renewcommand{\P}{\mathbb P}

\newcommand{\EE}{\mathscr E}
\newcommand{\DD}{\mathscr D}
\newcommand{\CC}{\mathscr C}
\newcommand{\JJ}{\mathscr J}

\newcommand{\sfrac}[2]{{\textstyle \frac{#1}{#2}}}
\newcommand{\case}[4]{\left\{\begin{array}{ll}#1,&#2\\#3,&#4\end{array}\right.}

\newcommand{\lqq}{``}
\newcommand{\rqq}{''}

\newcommand{\<}{\langle}
\renewcommand{\>}{\rangle}

\newtheorem{te}{Theorem}
\newtheorem*{mainte}{Main theorem}
\newtheorem{pr}[te]{Proposition}
\newtheorem{lem}[te]{Lemma}

\newtheorem{co}[te]{Corollary}

\title{Whitney extension operators without loss of derivatives}

\author{Leonhard Frerick}
\address{Fachbereich IV -- Mathematik, Universit\"{a}t Trier, D-54286 Trier, Germany}
\email{frerick@uni-trier.de}

\author{Enrique Jord\'a}

\address{Departamento de Matem\'atica Aplicada, E. Polit\'{e}cnica Superior
de Alcoy, Universidad Polit\'ecnica de Valencia, Plaza Ferr\'andiz
y Carbonell 2, E-03801 Alcoy (Alicante), Spain}
\email{ejorda@mat.upv.es}

\author{Jochen Wengenroth}
\address{Fachbereich IV -- Mathematik, Universit\"{a}t Trier, D-54286 Trier, Germany}
\email{wengenroth@uni-trier.de}

\subjclass[2010]{47A57}
\keywords{Whitney jets, extension operator}

\begin{document}

\begin{abstract}
For a compact set $K\subseteq \R^d$ we characterize  the
existence of a linear extension operator
$E:\EE(K)\to\CC^\infty(\R^d)$ for the space of Whitney jets $\EE(K)$
without loss of derivatives, that is, it satisfies the best possible
continuity estimates 
\[
\sup\{|\partial^\alpha
E(f)(x)|:  |\alpha|\le n, x\in\R^d\} \le C_n \|f\|_{n},
\]
where $\|\cdot\|_n$ denotes the $n$-th Whitney norm. The
characterization is a surprisingly simple purely geometric condition telling
in a way that at all its points, $K$ is big enough in all directions.
\end{abstract}

\maketitle

\section{The main result}

The problem that compact sets  $K\subseteq \R^d$ are often too small to determine all derivatives of differentiable functions on it was
overcome by Whitney's ingeneous invention of spaces $\EE^n(K)$ and $\EE(K)=\EE^\infty(K)$  of {\em jets} (of
finite and infinite order, respectively) which he proved to be exactly the spaces of restrictions $(\partial^\alpha f|_K)_\alpha$ for $f\in \CC^n(\R^d)$, 
$n\in\N\cup\{\infty\}$.
In the finite order case, the extension can be even done by a continuous linear operator, and the last eighty years have seen countless
results about the notoriously difficult problem to characterize the existence of continuous linear extension operators $\EE(K)\to\CC^\infty(\R^d)$ in the infinite order case
if both spaces are endowed with their natural families of norms.

In the present article we charaterize the existence of operators which, simultaneously for all $n\in\N_0\cup\{\infty\}$, are extensions $\EE^n(K)\to\CC^n(\R^d)$.
Till now, only very few cases were understood, the most prominent result being Stein's extension operator for sets with Lipschitz boundary.
In view of the apparent difficulty of the unrestricted case and to our own surprise the final answer for the case of 
{\em extension operators without loss of derivatives} is
strikingly simple:

\begin{mainte} 
A compact set $K\subseteq \R^d$ has an extension operator $\EE(K)\to \CC^\infty(\R^d)$  without
loss of derivatives if and only if there is $\varrho \in (0,1)$ such that, for every $x_0\in K$ and $\eps\in (0,1)$,
there are $d$ points $x_1\ldots,x_d$ in $K\cap B(x_0,\eps)$ satisfying 
$ \dist(x_{n+1}, \text{\rm affine hull}\{x_0,\ldots,x_n\}) \ge \varrho \eps$  for all $n\in\{0,\ldots,d-1\}$.

\end{mainte}
That this geometric condition is  Bos and Milman's reformulation of a characterization of sets with the local Markov property due to 
Jonsson, Sj\"ogren, and Wallin is explained in the following section.  

\section{Introduction}

For a compact set $K\subseteq \R^d$ we denote by $\EE^n(K)$ and $\EE(K)=\EE^\infty(K)$ the spaces of Whitney jets of
finite and infinite order, respectively,
that is, families $f=\left(f^{(\alpha)}\right)_{|\alpha|<n+1}$ of continuous (real or complex valued)
 functions whose formal Taylor polynomials (for finite $n$)
\[
T_y^n(f)(x)=\sum\limits_{|\alpha|\le n}\frac{f^{(\alpha)}(y)}{\alpha!}(x-y)^\alpha
\]
 give the \lqq correct\rqq\ approximation as if $f^{(\alpha)}$ were the partial derivative
of order $\alpha$, namely, the local \lqq approximation errors\rqq
\[
q_{n,t}(f)=\sup\left\{\frac{|f^{(\alpha)}(x)- \partial^\alpha
T_y^{n}(f)(x)|}{|x-y|^{n-|\alpha|}}: |\alpha|\le n, x,y\in K, 0< |x-y|\le t\right\}
\]
tend to $0$ for $t\to 0$ (and all $n$ in the case of $\EE(K)$).
The $n$-th Whitney norm is then
\[
\|f\|_n= \sup\{|f^{(\alpha)}(x)|: x\in K,\, |\alpha|\le n\} + \sup\{q_{n,t}(f): t>0\}.
\]

Clearly, Taylor's theorem implies that for any  $n \in\N_0 \cup \{\infty\} $ and
$g\in\CC^{n}(\Omega)$ with an open set $\Omega$ containing $K$
the restrictions $\left(\partial^\alpha g|_K\right)_{|\alpha|<n+1}$ are
jets of order $n$. A
celebrated result of Whitney \cite{Wh} says that each jet $f\in\EE^n(K)$ 
has such an extension. He even
proved that one can extend jets of {\it finite} order by a continuous
linear operator. 

The existence of continuous linear extension operators for jets of infinite order depends on the shape
of the compact set and there is a vast amount of literature about this question, we refer to the introduction of
\cite{Fr} for an overview.					


The problem we deal with in this paper is the characterization of compact sets $K$ having an extension
operator $E:\EE(K)\to \CC^\infty(\R^d)$ which induces, simultaneously for all $n$, continuous extension operators
$E:\EE^n(K) \to \CC^n(\R^d)$. 
Since $\EE(K)$ is dense in $\EE^n(K)$ this is the equivalent to the continuity estimates 
\[
\sup\{|\partial^\alpha
E(f)(x)|:  |\alpha|\le n, x\in\R^d\} \le C_n \|f\|_{n} \text{ for all } n\in\N_0.
\]

Seeley \cite{Seeley} gave a simple construction of such operators for half spaces
and 
Stein \cite{St} found extension operators without loss of
derivatives for compact subsets $K$ with
$\mbox{Lip}_1$-boundary. Also Rogers \cite{Rogers} gave a sufficient geometric condition for $K$ permitting the
existence of an operator such that even all the Sobolev spaces $W_k^p(K)$ can be extended
to $W_k^p(\R^d)$ for each $k\in\N$ and $1\leq p\leq \infty$.

Our main result will use a (local) Markov inequality for polynomials 
in a form (apparently) first used by Jonsson, Sj\"ogren, and Wallin \cite{JSW}.  Paw{\l}ucki and
Ple\'sniak \cite{PaPl1,PaPl2} as well as Ple\'sniak \cite{Pl} used a global version to characterize
the existence of continuous linear extension operators for $\EE(K)$ with a weaker topology, and Bos and
Milman \cite{BoMi} introduced a {\em local Markov inequality} with some exponent $r \ge 1$ (LMI($r$)) on a compact set $K$
to obtain extension operators with homogeneous loss of differentiability. The exact loss of
differentiability was then charcterized in \cite{frerickjordawengenroth2010}.

Let us now give the precise definition.
$K\subseteq \R^d$ satisfies the LMI($r$)
if there exist $\eps_0>0$ and constants $c_k \ge 1$ such that for each polynomial $P$ of degree
$\deg(P)\le k$, each $\eps\in(0,\eps_0)$,
and each $x_0\in K$ we have
\[
|\nabla P(x_0)| \le c_k \eps^{-r} \|P\|_{B(x_0,\eps)\cap K}
\]
where $\|\cdot\|_M$ denotes the uniform norm on a set $M\subseteq \R^d$ and
$B(x_0,\eps)$ the closed ball of radius $\eps$
centered at $x_0$.

By applying the estimate $k$-times with $2^{-k}\eps$ instead of
$\eps$ we obtain (with different constants $c_k$) for all $\alpha\in\N_0^d$
\[
|\partial^\alpha P(x_0)|\le c_k \eps^{-r|\alpha|} \|P\|_{B(x_0,\eps)\cap K}
\]
which is LMI$(r)$ in the form used by Bos and Milman. 

Modifying the constants in the definition of LMI one can replace 
the existence quantifier for $\eps_0$  by the universal quantifier or just by $\eps_0=1$.
(Bos and Milman kept track of the constants and therefore, the formulation with $\eps_0$
was used, for our purpose, the constants are not important).

LMI measures the \lqq local size\rqq\ of $K$ near its boundary points and will serve as the characterization
of the existence of extension operators without loss of derivatives in our main contribution:

\begin{te}
\label{theorem6} A compact set $K\subseteq \R^d$ has an extension operator $\EE(K)\to \CC^\infty(\R^d)$  without
loss of derivatives if and only if $K$ satisfies {\rm LMI($1$)}.
\end{te}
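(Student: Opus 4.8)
The plan is to treat the two implications separately; building the operator from LMI($1$) is the substantial direction. I would begin with a Whitney decomposition of $\R^d\setminus K$ into dyadic cubes $Q_j$ of side length $s_j\sim\dist(Q_j,K)$ and a subordinate partition of unity $\phi_j$ with $\sum_j\phi_j\equiv 1$ near $K$ and $|\partial^\beta\phi_j|\le C_\beta s_j^{-|\beta|}$. For each $j$ fix $x_j\in K$ with $\dist(x_j,Q_j)\sim s_j$ and attach to a jet $f$ a polynomial $P_j$ of degree $k_j$, where $k_j\to\infty$ as $s_j\to 0$ but as slowly as the growth of the Markov constants requires; set $E(f)=\sum_j\phi_jP_j$ on $\R^d\setminus K$ and $E(f)=f^{(0)}$ on $K$. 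Differentiating and using $\sum_j\partial^{\beta-\gamma}\phi_j=0$ for $\gamma<\beta$, one telescopes against a reference cube $j_0$ so that each summand becomes a difference $\partial^\gamma(P_j-P_{j_0})$ times a factor of size $s^{-(|\beta|-|\gamma|)}$, with $s\sim s_j\sim s_{j_0}$.

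The point of LMI($1$) is to estimate those differences at the sharp scale. On $K\cap B(x_j,Cs)$ both $P_j$ and $P_{j_0}$ approximate the values $f^{(0)}$, so their difference has uniform norm on $K$ of order $s^n q_{n,Cs}(f)$ for every $n\le k_j$; feeding this into $|\partial^\gamma Q|\le c_k\eps^{-|\gamma|}\|Q\|_{B\cap K}$ with $Q=P_j-P_{j_0}$ and $\eps\sim s$ yields $|\partial^\gamma(P_j-P_{j_0})(x)|\lesssim s^{\,n-|\gamma|}\|f\|_n$. The decisive feature is that the exponent of $\eps$ is exactly $|\gamma|$: the gained powers $s^{\,n-|\gamma|}$ cancel the lost powers $s^{-(|\beta|-|\gamma|)}$ coming from the partition of unity precisely, leaving $s^{\,n-|\beta|}\|f\|_n$ and hence, after summing the boundedly many overlapping cubes, a bound by $C_n\|f\|_n$ with no surviving negative power of $s$. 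Any exponent $r>1$ would leave a factor $s^{-(r-1)|\gamma|}$ and force a loss of derivatives; this is why $r=1$ is exactly the threshold. Finally one verifies that the resulting function is $\CC^\infty$ and that its derivatives extend continuously to $K$ with values $f^{(\alpha)}$, so that $E(f)$ is a genuine extension.

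The hard part is to run these estimates with constants depending on $n$ only, uniformly over all cubes. Since the Markov constants $c_k$ of LMI($1$) may grow with the degree while $k_j\to\infty$ as the cubes approach $K$, the degree function $k(s_j)$ must be tuned to grow slowly enough that $c_{k_j}$ is absorbed, yet fast enough that $k_j\ge n$ for all sufficiently small cubes, so that every order is eventually captured and the polynomial choice stays linear in $f$. Managing this trade-off---equivalently, fusing the classical finite-order Whitney extensions into a \emph{single} operator that is simultaneously sharp on every $\CC^n$---is the technical heart of the construction, and it is exactly what the sharp exponent $r=1$, rather than LMI($r$) with $r>1$, makes attainable.

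For the converse I would argue in contrapositive through the geometric reformulation of LMI($1$) recalled in the next section. If the $d$-point spreading condition fails, then there are points $x_0\in K$ and scales $\eps$ for which $K\cap B(x_0,\eps)$ lies within distance $\ll\eps$ of a proper affine subspace through $x_0$. In the transversal direction one can then manufacture jets whose Whitney norms are small---because on this thin part of $K$ the transversal coordinate is tiny---while any smooth extension is forced to change by a fixed amount across a distance $\ll\eps$, producing a transversal derivative that blows up relative to the jet norm and contradicting $\|\partial^\alpha E(f)\|_\infty\le C_n\|f\|_n$. The same mechanism is seen directly by testing a putative $E$ on the jet of a cut-off polynomial $\chi P$ localized in $B(x_0,\eps)$: matching $\nabla E(f)(x_0)=\nabla P(x_0)$ against the continuity estimate returns $|\nabla P(x_0)|\le c_k\eps^{-1}\|P\|_{B(x_0,\eps)\cap K}$, which is LMI($1$); the one delicate point in this direction is to bound the Whitney norm of the test jet solely by the local uniform norm of $P$, which is precisely where the thickness of $K$ must enter.
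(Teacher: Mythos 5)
Your sufficiency argument has the right scaffolding (Whitney cubes, partition of unity, telescoping against a reference cube, LMI($1$) at the sharp exponent), but it is missing the one ingredient that actually carries the proof: a definition of the local polynomials $P_j$ for which your key estimate can hold. You need $P_j$ to depend linearly on the jet, to have degree $k_j\to\infty$, and to satisfy $\|P_j-f^{(0)}\|_{K\cap B(x_j,Cs_j)}\lesssim s_j^{\,n}\,q_{n,Cs_j}(f)$ \emph{simultaneously for every} $n\le k_j$. If $P_j$ is the Taylor polynomial $T^{k_j}_{x_j}(f)$ of the jet --- the only choice your text suggests --- this claim is false: $T^{k_j}_{x_j}(f)-T^{n}_{x_j}(f)=\sum_{n<|\alpha|\le k_j}f^{(\alpha)}(x_j)(x-x_j)^\alpha/\alpha!$ contains jet components of order greater than $n$, and $\|f\|_n$ gives no control whatsoever over these. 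Concretely, on $K=[0,1]^d$ the jets of $g_M(x)=M^{-n-1/2}\sin(Mx_1)$ have $n$-th Whitney norm $O(M^{-1/2})\to 0$ while their components of order $n+1$ have size $M^{1/2}\to\infty$; feeding them into a Taylor-based formula with $k_j\ge n+1$ destroys any bound by $C_n\|f\|_n$, so the exact cancellation of powers of $s$ that you describe never gets to act. If instead $P_j$ is meant to be some linear quasi-interpolant manufactured from $f^{(0)}$ alone, then the asserted simultaneous-in-$n$ approximation property is precisely the hard part of the theorem, and you have assumed it rather than proved it. This is what the paper's Section 4 exists for: it constructs measures $\mu_{\alpha,i}$ on $K$, depending only on $f^{(0)}$, such that $\mu_{\alpha,i}(f^{(0)})$ approximates $f^{(\alpha)}(x_i)$ to order $o(\gamma_i^{\,n-|\alpha|})$ for every $n$ (and is $o(\gamma_i^{\,n-|\alpha|})$ when $|\alpha|>n$), and uses $S_i(f)=\sum_{|\alpha|\le i}\mu_{\alpha,i}(f^{(0)})(x-x_i)^\alpha/\alpha!$ in place of your $P_j$. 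These measures come from solving moment problems $\int y^\beta\varrho(y)^{-1}d\mu=\alpha!\,\delta_{\alpha\beta}$ on the blow-ups $A_{x,\eps}=\eps^{-1}(K-x)\cup\{y:|y|\ge\eps^{-1}\}$ by duality (the bipolar theorem in a weighted $\CC_0$-space); the role of the exponent $r=1$ is that LMI($1$) is scale invariant, i.e.\ it yields Markov inequalities on all blow-ups $A_{x,\eps}$ with constants independent of $x$ and $\eps$, which is what makes these moment problems uniformly solvable. Without this construction (or an equivalent one) your plan does not close; with it, the remaining estimates are close to the ones you wrote, and the paper even avoids part of that bookkeeping by comparing $E$ with Whitney's finite-order operators $E_n$ and invoking the closed graph theorem instead of estimating $E$ directly.

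On the converse, note first that the paper does not reprove it (necessity is quoted from Bos--Milman and from the authors' earlier paper), so a citation would be acceptable there. But your sketch, taken as a proof, founders on exactly the point you flag yourself: to deduce $|\nabla P(x_0)|\le c_k\eps^{-1}\|P\|_{B(x_0,\eps)\cap K}$ from $|\nabla E(f)(x_0)|\le C_1\|f\|_1$ with $f$ the jet of $\chi P$, you must bound $\|f\|_1$ by $\eps^{-1}\|P\|_{B(x_0,\eps)\cap K}$, and $\|f\|_1$ already contains $\sup_{K\cap B(x_0,\eps)}|\nabla(\chi P)|$ --- the very quantity the Markov inequality is supposed to control. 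Acknowledging this circle is not the same as breaking it, so as written both directions of your argument have genuine gaps.
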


The big advantage of LMI($1$) compared to the case $r>1$ is that 
Jonsson, Sj\"ogren, and Wallin \cite[Theorem 1.2 and 1.3] {JSW} showed that it is enough to check LMI($1$) for polynomials of degree $1$ 
which enabled them to obtain a purely geometric characterization figuring in the following corollary.

\begin{co}
A compact set $K\subseteq \R^d$ has an extension operator $\EE(K)\to \CC^\infty(\R^d)$  without
loss of derivatives if and only if there is $\varrho \in (0,1)$ such that, for every $x_0\in K$ and $\eps\in (0,1)$, $K\cap B(x_0,\eps)$
is not contained in any band of the form $\{x\in \R^d: |\langle b, x-x_0\rangle| \le \varrho \eps\}$ where $b\in\R^d$ is any vector of norm $1$.
\end{co}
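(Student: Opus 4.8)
The plan is to combine Theorem~\ref{theorem6} with the cited result of Jonsson, Sj\"ogren, and Wallin and then translate the resulting degree-one Markov inequality into the geometric band condition by an elementary computation. By Theorem~\ref{theorem6}, the existence of an extension operator without loss of derivatives is equivalent to LMI($1$), and by \cite[Theorem 1.2 and 1.3]{JSW} the latter is equivalent to LMI($1$) restricted to affine polynomials. So it remains to show that LMI($1$) for polynomials of degree $\le 1$ is equivalent to the stated band condition.

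First I would exploit that an affine polynomial $P(x)=\<b,x\>+c$ has constant gradient $\nabla P\equiv b$. By homogeneity it suffices to treat $|b|=1$, and for such $b$ the degree-one LMI($1$) reads: there is $c_1$ with $\eps\le c_1\,\|P\|_{B(x_0,\eps)\cap K}$ for all $x_0\in K$, all admissible $\eps$, and all constants $c$. Writing $M=B(x_0,\eps)\cap K$ and abbreviating the width in direction $b$ by $w_b(M):=\sup_{x\in M}\<b,x\>-\inf_{x\in M}\<b,x\>$, optimizing over the constant term at the Chebyshev center yields
\[
\inf_{c\in\R}\|\<b,\cdot\>+c\|_M=\tfrac12\,w_b(M),
\]
so degree-one LMI($1$) is precisely a uniform lower bound $w_b(M)\ge 2\eps/c_1$ on the width of $K\cap B(x_0,\eps)$ in every unit direction $b$.

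It then remains to compare this width condition with the band condition, which asks that $M$ not be contained in the symmetric band $\{x\in\R^d:|\<b,x-x_0\>|\le\varrho\eps\}$. Since $x_0\in M$, the value $\<b,x_0\>$ lies between $\inf_{x\in M}\<b,x\>$ and $\sup_{x\in M}\<b,x\>$, so that $\sup_{x\in M}|\<b,x-x_0\>|$ is sandwiched between $\tfrac12 w_b(M)$ and $w_b(M)$. Consequently \lqq not contained in the band\rqq\ and \lqq width bounded below by a fixed multiple of $\eps$\rqq\ imply one another after replacing $\varrho$ by $\varrho/2$. This two-sided comparison, together with the correct bookkeeping of constants when passing between the band forced to be centered at $x_0$ and the intrinsic width $w_b(M)$, is the only genuinely delicate point; everything else is a direct quotation of Theorem~\ref{theorem6} and \cite{JSW}.
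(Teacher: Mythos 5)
Your proof is correct and takes essentially the same route as the paper: both rest on Theorem~\ref{theorem6} combined with the Jonsson--Sj\"ogren--Wallin reduction of LMI($1$) to polynomials of degree one \cite{JSW}. The only difference is that you carry out explicitly the elementary translation between the degree-one Markov inequality and the band condition (via $\inf_{c}\|\langle b,\cdot\rangle+c\|_M=\tfrac12 w_b(M)$ and the sandwich $\tfrac12 w_b(M)\le \sup_{x\in M}|\langle b,x-x_0\rangle|\le w_b(M)$ for $x_0\in M$), a reformulation the paper simply quotes from \cite{JSW} and \cite{BoMi}; your computation is sound, up to the harmless normalization $\eps_0=1$ already noted in the paper.
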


An equivalent formulation was given by Bos and Milman in \cite[Theorem D]{BoMi}: $K\subseteq \R^d$ satisfies LMI($1$) if and only if there is $\varrho \in (0,1)$ such that
for every $x_0\in K$ and $\eps\in (0,1)$  there are $d$ points $x_1\ldots,x_d$ in $K\cap B(x_0,\eps)$ such that for all $n\in\{0,\ldots,d-1\}$
\[
 \dist(x_{n+1}, \text{affine hull}\{x_0,\ldots,x_n\}) \ge \varrho \eps
\]
(together with this reformulation, the corollary proves the main theorem).

These geometric conditions are very easy to check in concrete cases so that, for instance, our result includes Stein's theorem about compact sets with
Lip$_1$-boundary as well as sets with {\em inward} directed cusps (which are covered neither by Stein's theorem nor by Roger's results). Moreover, we easily obtain that
such porous sets like Cantor's or the Sierpi\'nski triangle admit extension operators without loss of differentiability whereas sets with outward directed cusps do not.

\medskip
Let us close this introduction with the remark that there is of course an alternative approach to smooth functions on small sets just
by setting $\CC^\infty(K)=\{f|_K: f\in \CC^\infty(\R^d)\}$ endowed with the quotient topology. If $\EE(K)$ admits a linear continuous
extension operator at all then, by a result in \cite{Fr}, $\EE(K)$ and $\CC^\infty(K)$ coincide and our result thus applies to the latter space.
However, if $K$ is too small, much less is known about extension operators for $\CC^\infty(K)$. On the one hand, there is a very deep result of
Bierstone and Milman \cite{BiMi} about semicoherent subanalytic sets and on the other hand, two special cases of non-subanalytic sets
were treated  by Fefferman and Ricci in \cite{FeRi}.

\medskip
The rest of the paper is organized as follows.
Since necessity of LMI($1$) follows from results in \cite{BoMi} and a short explicit proof is also contained in \cite{frerickjordawengenroth2010}, 
we only have to show sufficiency. 
In section 3 we will explain the construction of the extension opearator which is based on certain measures $\mu_\alpha$ so that
$\int f d\mu_\alpha$ interpolate the partial derivatives (this part is similar to our previous article \cite{frerickjordawengenroth2010}
and was inspired by Whitney's original construction). In the fourth section we will then show how to obtain those measures
if $K$ satisfies LMI($1$).

\section{Construction of the extension operators}

Let us first recall
 Whitney's explicit construction of an extension operator for $\EE^n(K)$. For a suitable partition of unity
$(\varphi_i)_{i\in\N}$ of $\Omega\setminus K$
(where $\Omega$ is an open set containing $K$)
such that the supports of $\varphi_i$ tend to $K$, and for $x_i\in K$ minimizing the
distance to $\supp(\varphi_i)$
the operator $E_n$ is of the following form
\[
E_n(f)(x)=\case{f^{(0)}(x)}{x\in K}{\sum\limits_{i=1}^\infty\varphi_i(x)T^n_{x_i}(f)(x)}{x\notin K}.
\]
As we want to have an operator which works simultaneously for all $n\in\N_0$ we replace the $n$-th degree Taylor polynomials around $x_i$ 
by certain \lqq interpolations\rqq\ which only depend on $f^{(0)}$, namely 
\[
 S_{i}(f)(x)= \sum_{|\alpha|\le i} \frac{1}{\alpha !}  \mu_{\alpha,i}(f^{(0)}) (x-x_i)^\alpha,
\]
where $\mu_{\alpha,i}(f^{(0)})=\int f^{(0)} d\mu_{\alpha,i}$ is the integral of $f$ with respect to a suitable (complex) measure.

\begin{te} \label{theorem 3}
Suppose that for all  $\alpha\in\N_0^d$, $x\in\partial K$, and $\eps>0$ there are measures $\nu_{\alpha,x,\eps}$ on $K$ such that, for all $n\in\N_0$ and  $f\in \EE^n(K)$,
\begin{align*}
&\lim_{\eps\to 0}\sup_{|\alpha|\leq n,x\in\partial K}\frac{|\nu_{\alpha,x,\eps}(f^{(0)})-\eps^{|\alpha|}f^{(\alpha)}(x)|}{\eps^n}=0
\text{ and }\\
&\lim_{\eps\to 0}\sup_{|\alpha|> n,x\in\partial K}\frac{|\nu_{\alpha,x,\eps}(f^{(0)})|}{\eps^n}=0.
\end{align*}
Then $K$ has an extension operator without loss of derivatives.
\end{te}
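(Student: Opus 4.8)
The plan is to glue the interpolations $S_i$ into a single function by Whitney's partition-of-unity scheme, to prove one local estimate near $\partial K$ that transfers the two hypotheses into continuity of all derivatives across the boundary, and then to read off the estimates ``without loss of derivatives'' from the closed graph theorem, so that no quantitative control of the measures $\nu_{\alpha,x,\eps}$ is ever needed. First I would fix the data: let $(\varphi_i)_{i\in\N}$ be a Whitney partition of unity of $\Omega\setminus K$ with bounded overlap, with $\dist(x,K)\approx\diam(\supp(\varphi_i))=:\eps_i$ for $x\in\supp(\varphi_i)$, and with $|\partial^\gamma\varphi_i(x)|\le C_\gamma\,\dist(x,K)^{-|\gamma|}$. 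Choosing $x_i\in\partial K$ nearest to $\supp(\varphi_i)$ and setting $\mu_{\alpha,i}:=\eps_i^{-|\alpha|}\nu_{\alpha,x_i,\eps_i}$, the two hypotheses say exactly that the coefficients $c_{\alpha,i}:=\mu_{\alpha,i}(f^{(0)})$ of
\[
S_i(f)(y)=\sum_{|\alpha|\le N_i}\frac{c_{\alpha,i}}{\alpha!}(y-x_i)^\alpha
\]
satisfy $|c_{\alpha,i}-f^{(\alpha)}(x_i)|\le\eta(\eps_i)\,\eps_i^{\,n-|\alpha|}$ for $|\alpha|\le n$ and $|c_{\alpha,i}|\le\eta(\eps_i)\,\eps_i^{\,n-|\alpha|}$ for $|\alpha|>n$, with $\eta(\eps)\to0$; here the degree cut-off $N_i$ is the role of the index in $S_i$ and is arranged to increase to $\infty$ as $\eps_i\to0$. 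I then set $E(f)=f^{(0)}$ on $K$ and $E(f)=\sum_i\varphi_i S_i(f)$ off $K$, and fix once and for all a cut-off $\chi\in\CC^\infty(\R^d)$ equal to $1$ near $K$ and supported in a compact $L\subseteq\Omega$; the genuine extension will be $\chi E(f)$.

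Off $K$ the sum is locally finite, so $E(f)$ is smooth there, and the heart of the matter is the local estimate: for $x\notin K$ with $\delta:=\dist(x,K)$ small, nearest point $x^*\in\partial K$, and $|\beta|\le n$,
\[
\bigl|\partial^\beta E(f)(x)-f^{(\beta)}(x^*)\bigr|\longrightarrow 0\qquad(\delta\to0).
\]
To obtain it I would use $\sum_i\varphi_i\equiv1$ near $x$ to write, with the polynomial $T:=T^n_{x^*}(f)$,
\[
\partial^\beta\bigl(E(f)-T\bigr)(x)=\sum_i\sum_{\gamma\le\beta}\binom{\beta}{\gamma}\partial^\gamma\varphi_i(x)\,\partial^{\beta-\gamma}\bigl(S_i(f)-T\bigr)(x),
\]
where only the boundedly many indices with $x\in\supp(\varphi_i)$ contribute, and for each of them $\eps_i\approx\delta$ and $|x-x_i|,|x^*-x_i|\le C\delta$.

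Inserting the intermediate polynomial $T^n_{x_i}(f)$, the coefficient bounds above and the uniform-in-$\alpha$ second hypothesis make the monomial sum converge and give $|\partial^\mu(S_i(f)-T^n_{x_i}(f))(x)|\le C\,\eta(\eps_i)\,\delta^{\,n-|\mu|}$, while the standard comparison of the Taylor polynomials of the jet $f$ at the nearby points $x_i,x^*$ yields $|\partial^\mu(T^n_{x_i}(f)-T)(x)|\le C\,q_{n,C\delta}(f)\,\delta^{\,n-|\mu|}$. Combined with $|\partial^\gamma\varphi_i(x)|\le C\delta^{-|\gamma|}$, every summand is bounded by $(\eta(\eps_i)+q_{n,C\delta}(f))\,\delta^{\,n-|\beta|}$, so that $\partial^\beta(E(f)-T)(x)=o(\delta^{\,n-|\beta|})$; since $\partial^\beta T(x)\to f^{(\beta)}(x^*)$ the displayed limit follows, and $x^*\to x_0$ together with continuity of the jet gives the boundary value $f^{(\beta)}(x_0)$. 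This is the step I expect to be the main obstacle, precisely because the interpolation rate $\eps_i^{\,n-|\alpha|}$ must match the $\delta^{-|\gamma|}$ blow-up of the derivatives of the partition of unity, and the tail of the growing-degree polynomial $S_i$ must be absorbed by the second hypothesis.

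With the local estimate in hand, the family $g_\beta:=f^{(\beta)}$ on $K$ and $g_\beta:=\partial^\beta E(f)$ off $K$ consists of continuous functions, and the classical continuation lemma (a continuous function which is $\CC^n$ off the closed set $K$ and all of whose partials up to order $n$ extend continuously is $\CC^n$, with those extensions as its partials) gives $E(f)\in\CC^\infty(\R^d)$ with $\partial^\beta E(f)|_K=f^{(\beta)}$, so $\chi E(f)$ is an extension of the jet $f$. Finally, to get the estimates without loss of derivatives I would invoke the closed graph theorem rather than any explicit bound: $\chi E\colon\EE^n(K)\to\{g\in\CC^n(\R^d):\supp g\subseteq L\}$ is linear between Banach spaces, and it is closed because $f_k\to f$ in $\EE^n(K)$ forces $f_k^{(0)}\to f^{(0)}$ uniformly, hence $\nu_{\alpha,x_i,\eps_i}(f_k^{(0)})\to\nu_{\alpha,x_i,\eps_i}(f^{(0)})$ and thus $\chi E(f_k)\to\chi E(f)$ pointwise. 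Therefore $\chi E$ is continuous, i.e.\ $\sup\{|\partial^\beta\chi E(f)(x)|:|\beta|\le n,\ x\in\R^d\}\le C_n\|f\|_n$ for every $n$, which is exactly an extension operator without loss of derivatives.
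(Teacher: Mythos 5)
Your construction and your key local estimate essentially parallel the paper's proof: you glue the measure-based interpolations $S_i$ with a Whitney partition of unity, use the first hypothesis for $|\alpha|\le n$ together with the uniform-in-$\alpha$ second hypothesis to absorb the tail $n<|\alpha|\le N_i$ of the growing-degree polynomials, and finish with a closed graph argument. The one genuine gap is the ``classical continuation lemma'' you invoke to pass from your local estimate to $E(f)\in\CC^n(\R^d)$. As you state it --- a continuous function which is $\CC^n$ off the closed set $K$ and all of whose partials up to order $n$ extend continuously to $\R^d$ is $\CC^n$ --- the lemma is false: take $K\subseteq\R$ the Cantor set and $g$ the Cantor function; then $g$ is continuous, $\CC^\infty$ off $K$ with all derivatives identically $0$ there, so every partial extends continuously (by $0$), yet $g$ is not $\CC^1$, since $g'\equiv 0$ would force $g$ to be constant. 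Mere continuity of the extended partials is not enough information; what makes your situation work is precisely the quantitative rate $\partial^\beta\bigl(E(f)-T^n_{x^*}(f)\bigr)(x)=o\bigl(\dist(x,K)^{n-|\beta|}\bigr)$ that you proved --- and then threw away when citing the lemma.

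There are two legitimate ways to close the gap, and the paper's proof illustrates one of them. Either invoke the correct gluing lemma (the Whitney--Hestenes lemma, which is the analytic core of Whitney's extension theorem): if $g$ agrees with $f^{(0)}$ on $K$, is $\CC^n$ off $K$, and satisfies the above $o\bigl(\dist(x,K)^{n-|\beta|}\bigr)$ comparison with the Taylor polynomials of the jet, uniformly in the boundary point, then $g$ is a $\CC^n$ extension of the jet $f$. Or, as the paper does, compare $E$ not with Taylor polynomials but with Whitney's extension operator $E_n$: the difference $(E-E_n)(f)$ vanishes \emph{identically} on $K$, rather than attaining prescribed nonzero boundary values, and for functions vanishing on $K$ the weak statement you want is actually true --- differentiability at points of $K$ follows from a mean value argument along segments, since every increment starts from the value $0$. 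This choice has the further advantage that continuity of $E=E_n+(E-E_n)$ then follows from Whitney's theorem for $E_n$ plus the closed graph theorem applied to $E-E_n$ with values in $\JJ^n(K)$, exactly parallel to your last step. So your proof is reparable with estimates you already have, but as written the final smoothness step rests on a false statement.
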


\begin{proof}
 
We consider the partition of unity  constructed by Whitney \cite{Wh}. For $K\subseteq \R^d$ compact there are an open set $\Omega$
containing $K$ and positive test functions $\varphi_i\in\DD(\Omega\setminus K)$ with the following properties.
\begin{enumerate}[(i)]
\item $\sum\limits_{i=1}^\infty \varphi_i(x) =1$ for all $x\in\Omega\setminus K$ and each point belongs to at most $N$ supports $\supp(\varphi_i)$
    for some constant $N\in\N$. 
\item $\supp(\varphi_i)\to K$ for $i\to\infty$, that is, for each $\eps>0$ there is $k\in\N$ such that
    $\supp(\varphi_i)\subseteq\{x\in\R^d: \dist(x,K)<\eps\}$ for all $i\ge k$. 
\item $\diam(\supp(\varphi_i)) \le 2\dist(\supp(\varphi_i),K)$
    (where $\diam$ is the diameter of a set). 
\item There are constants $c_\beta$ such that $\left|\partial^\beta\varphi_i(x)\right|\le c_\beta
    \dist(x,K)^{-|\beta|}$ for all $i\in\N$, $\beta\in\N_0^d$, and $x\in\R^d$.
\end{enumerate}

With the help of this partition Whitney showed that the operators $E_n$ defined above are indeed continuous linear extension operators from $\EE^n(K)$ to $\CC^n(\R^d)$.
Let us denote 
$$
\gamma_i =\dist(K,\supp(\varphi_i))=\dist(x_i,\supp(\varphi_i)).
$$
For $|\beta|\leq n$ and $|\alpha|\leq n$ we can use Leibniz's rule, (iii), and (iv)  to obtain positive constants $C_n$ independent of $i$ such that
 \begin{equation}
 \label{one1}
 \left|\partial^\beta\left((x-x_i)^\alpha\varphi_i(x)\right)\right|\leq C_n\gamma_{i}^{|\alpha|-|\beta|} \text{ for all } x\in \supp(\varphi_i).
 \end{equation}

 For $|\beta|\leq n$ and $|\alpha|>n$ we observe that property (iii) implies that $|x-x_i|\leq 3\gamma_i$ for all
 $x\in\supp(\varphi_i)$. Using Leibniz's rule again, we find (different) $C_n$ not depending on $i$ such that
 \begin{equation}
 \label{two2}
 \left|\partial^\beta\left((x-x_i)^\alpha\varphi_i(x)\right)\right|\leq C_n3^{|\alpha|}\sup_{\gamma\leq
 \beta,\gamma\leq\alpha}\frac{\alpha!}{(\alpha-\gamma)!}\gamma_{i}^{|\alpha|-|\beta|},\ x\in \supp(\varphi_i).
\end{equation}
We remark that for $|\beta|\leq n$
\begin{equation} \label{ola}
\sum_{|\alpha|>n}  \sup_{\gamma\leq \beta,\gamma\leq\alpha}\frac{1}{(\alpha-\gamma)!}3^{|\alpha|}
\leq 3^{|\beta|}\sum_{\gamma\leq\beta}\sum_{|\alpha|>n,\alpha\geq \gamma}\frac{1}{(\alpha-\gamma)!}3^{|\alpha-\gamma|}\leq
\end{equation}
$$
\leq 3^n (n+1)^d\left(\sum_{j\in\N_0}\frac{1}{j!}3^{j}\right)^d= e^{3d} (n+1)^d 3^n.
$$
We set  $\mu_{\alpha,i}=\nu_{\alpha,x_i,\gamma_i}/\gamma_i^{|\alpha|}$, and for $f \in \EE^0(K)=\CC(K)$ we define
\[
E(f)=\case{f^{(0)}(x)}{x\in K}{\sum\limits_{i\in \N}  \varphi_i(x)  \sum\limits_{|\alpha|\leq i}\frac{1}{\alpha!}\mu_{\alpha,i}(f^{(0)})(x-x_i)^{\alpha}}{x\notin K}.
\]
With Whitney's operators $E_n$  we will show below that for all $|\beta|\leq n$, $f\in\EE^n(K)$  we have

\begin{equation}
\label{closedgraph} \left|\partial^\beta E(f)(x)-\partial^\beta E_n(f)(x)\right|=o(\dist(x,K)^{n-|\beta|}) \quad \text{ for } x\to\partial K.
\end{equation}

This implies that $(E-E_n)(f)$ admits derivatives up to order $n$ in (the boundary of) $K$ and that they all 
vanish on $K$. Since the partition $\varphi_i$ is locally finite $E(f)$ is clearly $\CC^\infty$ on $\R^d\setminus K$. Thus  $E-E_n:\EE^n(K)\to \CC^n(\R^d)$ is a well
defined linear operator and takes its values in $\JJ^n(K)=\{g\in \CC^n(\R^d):\partial^\alpha g(x)=0\mbox{ for all } x\in K, \ |\alpha|\leq n\}$. 

It is clear that the operator is continuous if we consider in $\JJ^n(K)$ the topology of pointwise convergence  in
$\R^d\setminus K$. Since this topology is Hausdorff we can apply the closed graph theorem to conclude that $E-E_n:\EE^n(K)\to \JJ^n(K)$ is continuous
with respect to the Fr\'echet space topology on $\JJ^n(K)$, and therefore
also $E:\EE^n(K)\to \CC^n(\R^d)$ is continuous.

Let us now prove (\ref{closedgraph}). For $x\in \R^d\setminus K$ we define $i(x)=\min\{i\in I: x\in\supp \varphi_i\}$.
Because of the property (ii) we then have $i(x)\to \infty$ if $x\to \partial K$. 
For $|\beta|\leq n$, $f\in \EE^n(K)$, and $i(x)>n$  we have 
\begin{align*}
&\partial^\beta(E(f)-E_n(f))(x)  = &  \\
 & \sum_{i \ge i(x)}\sum_{|\alpha|\leq n}\frac{1}{\alpha!} (\mu_{\alpha,i}(f^{(0)})- f^{(\alpha)}(x_i))\partial^\beta((x-x_i)^\alpha\varphi_i(x)) \\
+ & \sum_{i \ge i(x)}\sum_{n<|\alpha|\leq i}\frac{1}{\alpha!} \mu_{\alpha,i}(f^{(0)})\partial^\beta((x-x_i)^\alpha\varphi_i(x)).
\end{align*}

We will estimate both terms. Using the hypotheses on the measures we get for $|\alpha|\leq n$
$$
|\mu_{\alpha,i}(f^{(0)})- f^{(\alpha)}(x_i)|=o(\gamma_{i}^{n-|\alpha|}) \quad\mbox{as }i \to\infty.
$$
 From this, (\ref{one1}), and the bound for the number of supports that can contain $x$ we obtain
$$
\left|\sum_{|\alpha|\leq n}\frac{1}{\alpha!} (\mu_{\alpha,i}(f^{(0)})-f^{(\alpha)}(x_i))\partial^\beta((x-x_i)^\alpha\varphi_i(x))\right|
=o(\gamma_i^{n-|\beta|})\text{ for } i\to \infty,
$$
where the limit is uniform with respect to $x\in \R^d$.

From $\gamma_i\leq d(x,K)\leq 3\gamma_i$ for each $x\in \supp(\varphi_i)$ it then follows that
$$
\lim_{x\to \partial K}\sum_{i \ge i(x)}\frac{1}{d(x,K)^{n-|\beta|}}\left|\sum_{|\alpha|\leq n}\frac{1}{\alpha!} 
(\mu_{\alpha,i}(f^{(0)})-f^{(\alpha)}(x_i))\partial^\beta((x-x_i)^\alpha\varphi_i(x))\right|
=0.
$$
For the second summand we use again the hypotheses to get
$$
\sup_{|\alpha|>n}\left|\mu_{\alpha,i}(f^{(0)}) {\gamma_i^{|\alpha|}}\right|=o(\gamma_{i}^{n})\quad\mbox{as } i \to\infty.
$$
Then (\ref{two2}) and (\ref{ola}) imply that  (again uniformly in $x$) 
\begin{align*}
& \left|\sum_{|\alpha|> n}\frac{1}{\alpha!} \mu_{\alpha,i}(f^{(0)})\partial^\beta \left((x-x_i)^\alpha\varphi_i(x)\right)\right| \\ 
\leq &
  \sum_{|\alpha| > n} \frac{1}{\alpha!} \gamma_i^{-|\alpha|} o (\gamma_i^n) C_n \sup_{\gamma\leq\beta,\gamma\leq\alpha}
   \frac{\alpha!}{(\alpha-\gamma)!}3^{|\alpha|} \gamma_{i}^{|\alpha|-|\beta|} \\
\leq & 
 o(\gamma_i^n) C_n e^{3d} (n+1)^d 3^n \gamma_{i}^{-|\beta|} = o(\gamma_{i}^{n-|\beta|}) \quad \mbox{ as } i\to \infty.
\end{align*}

Altogether we obtain (remembering $i(x)\to\infty$ if $x\to \partial K$)
$$
\lim_{x\to \partial K}
\sum_{i\ge i(x)}\frac{1}{d(x,K)^{n-|\beta|}}\left|\sum_{n<|\alpha|\leq i}\frac{1}{\alpha!} \mu_{\alpha,i}(f^{(0)})\partial^\beta((x-x_i)^\alpha\varphi_i(x))\right|=0$$
which gives (\ref{closedgraph}).
\end{proof}

\section{Construction of the measures}

To finish the proof of the main theorem we have to show the existence of the measures figuring in theorem \ref{theorem 3}:

\begin{pr}
\label{measures}
Let $K\subseteq \R^d$ be a compact subset satisfying {\rm LMI($1$)}.
For all $\alpha\in\N_0^d$, $x\in \partial K$, and $\eps \in (0,1)$ there is a measure $\nu_{\alpha,x,\eps}$ on $K$ such that, for all  $n\in\N_0$ and $f\in \EE^n(K)$,
\begin{align*}
&\lim_{\eps\to 0}\sup_{|\alpha|\leq n,x\in\partial K}\frac{|\nu_{\alpha,x,\eps}(f^{(0)})-\eps^{|\alpha|} f^{(\alpha)}(x)|}{\eps^n}=0
\text{ and }\\
&\lim_{\eps\to 0}\sup_{|\alpha|> n,x\in\partial K}\frac{|\nu_{\alpha,x,\eps}(f^{(0)})|}{\eps^n}=0.
\end{align*}
\end{pr}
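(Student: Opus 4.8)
The plan is to realise each $\nu_{\alpha,x,\eps}$ as a measure on $K\cap B(x,\eps)$ that reproduces the scaled derivative functional $P\mapsto \eps^{|\alpha|}\partial^\alpha P(x)$ on all polynomials up to some degree $k(\eps)$ that tends to $\infty$ as $\eps\to 0$. Since $\partial^\alpha(z-x)^\gamma|_{z=x}=\alpha!\,\delta_{\gamma\alpha}$, this is exactly the prescription of moments
\[
\int (z-x)^\gamma\, d\nu_{\alpha,x,\eps}(z)=\eps^{|\alpha|}\,\alpha!\,\delta_{\gamma\alpha}\qquad(|\gamma|\le k(\eps)).
\]
Writing $f^{(0)}=T_x^{n}(f)+g$ with $g=f^{(0)}-T_x^{n}(f)$ and using $|g(z)|\le q_{n,|z-x|}(f)\,|z-x|^{n}\le q_{n,\eps}(f)\,|z-x|^{n}$ on $K\cap B(x,\eps)$, these moment identities give, once $k(\eps)\ge n$ and because $\partial^\alpha T_x^n(f)(x)=f^{(\alpha)}(x)$ for $|\alpha|\le n$ and $=0$ for $|\alpha|>n$,
\[
\nu_{\alpha,x,\eps}(f^{(0)})-\eps^{|\alpha|}f^{(\alpha)}(x)=\nu_{\alpha,x,\eps}(g)\quad(|\alpha|\le n),\qquad
\nu_{\alpha,x,\eps}(f^{(0)})=\nu_{\alpha,x,\eps}(g)\quad(|\alpha|>n).
\]
Everything is thereby reduced to estimating $\nu_{\alpha,x,\eps}(g)$ for a jet vanishing to order $n$, and the quantities to control are the weighted variations $\int |z-x|^{m}\,d|\nu_{\alpha,x,\eps}|(z)$.

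For the construction I would use {\rm LMI($1$)} through duality. The iterated inequality $|\partial^\alpha P(x)|\le c_k\eps^{-|\alpha|}\|P\|_{B(x,\eps)\cap K}$ (valid for all $\alpha$ with the same $c_k$) says that $P\mapsto\eps^{|\alpha|}\partial^\alpha P(x)$ is bounded by $c_k$ on the polynomials of degree $\le k$ with the uniform norm on $K\cap B(x,\eps)$, so Hahn--Banach yields a representing measure of total variation $\le c_k$. The role of the exponent being exactly $1$ is that this works at every scale with the correct homogeneity, so I would build the measure as a telescoping sum of such extensions living on the balls $B(x,2^{-j}\eps)$, the $j$-th term upgrading the reproduction from degree $|\alpha|+j-1$ to $|\alpha|+j$. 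Placing the high-degree corrections at fine scales is what should keep the weighted variations of order $m\ge|\alpha|$ bounded, $\int|z-x|^{m}\,d|\nu_{\alpha,x,\eps}|\le C_m\eps^{m}$ uniformly in $x$ and $\eps$, even though the total mass necessarily grows with $k(\eps)$.

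Granting this, the first limit is immediate and uniform: for $|\alpha|\le n$, since here the order $n$ of the weighted variation satisfies $n\ge|\alpha|$,
\[
\frac{|\nu_{\alpha,x,\eps}(f^{(0)})-\eps^{|\alpha|}f^{(\alpha)}(x)|}{\eps^{n}}
=\frac{|\nu_{\alpha,x,\eps}(g)|}{\eps^{n}}
\le q_{n,\eps}(f)\,\frac{\int|z-x|^{n}\,d|\nu_{\alpha,x,\eps}|}{\eps^{n}}
\le C_n\,q_{n,\eps}(f)\xrightarrow[\eps\to 0]{}0 .
\]
The hard part is the second limit. For $|\alpha|>n$ the crude bound would need the weighted variation of order $n<|\alpha|$, and precisely this cannot be kept bounded: reproducing $\partial^\alpha$ up to a degree tending to infinity forces the mass, and with it $\int|z-x|^{n}\,d|\nu_{\alpha,x,\eps}|/\eps^{n}$, to grow (a Markov/duality lower bound), so $\nu_{\alpha,x,\eps}(g)$ must be controlled through genuine cancellation rather than through $\int|g|\,d|\nu_{\alpha,x,\eps}|$. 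My plan here is to extend $f$ to some $\tilde f\in\CC^{n}(\R^{d})$ with $\|\tilde f\|_{\CC^{n}}\lesssim\|f\|_n$ (finite-order Whitney extension), so that $\nu_{\alpha,x,\eps}(f^{(0)})=\int\tilde f\,d\nu_{\alpha,x,\eps}$, and to use the exact vanishing of the moments of order $\le n$ together with mollifications of $\tilde f$: the degree-$|\alpha|$ Taylor remainder of a mollification integrates against $\nu_{\alpha,x,\eps}$ via the bounded weighted variation of order $|\alpha|+1$, while the leading reproduced term equals $\eps^{|\alpha|}\partial^\alpha(\tilde f*\phi_\eps)(x)$, which is $o(\eps^{n})$ because $\int\partial^{\alpha-\beta}\phi_\eps=0$ and $\partial^{n}\tilde f$ is uniformly continuous. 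Balancing the mollification scale against the scale $\eps$ of the measure and summing the dyadic contributions is the delicate step; I expect it to be the main obstacle, and the place where the geometry behind {\rm LMI($1$)} --- the $d$ well-spread points at every scale --- is really used.
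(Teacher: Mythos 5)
Your reduction to moment conditions and your treatment of the case $|\alpha|\le n$ are reasonable, and you have correctly located the difficulty in the case $|\alpha|>n$; but the construction you propose cannot be repaired, because it contradicts the very statement you want to prove. Your $\nu_{\alpha,x,\eps}$ is required to reproduce $P\mapsto\eps^{|\alpha|}\partial^\alpha P(x)$ \emph{exactly} on $\P_{k(\eps)}$ by a measure supported in $K\cap B(x,\eps)$, with $k(\eps)\to\infty$. Sharp Bernstein--Markov estimates on the ball (products of Chebyshev polynomials rescaled to $B(x,\eps)$) produce, for each $k$, a $P\in\P_k$ with $\|P\|_{B(x,\eps)}\le 1$ and $\eps^{|\alpha|}|\partial^\alpha P(x)|\ge c\,k^{|\alpha|}$; since $\|P\|_{K\cap B(x,\eps)}\le\|P\|_{B(x,\eps)}$, \emph{every} representing measure has total variation at least $c\,k(\eps)^{|\alpha|}\to\infty$. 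On the other hand, the conclusion of the proposition forbids exactly this: take $n=0$, so that $\EE^0(K)=\CC(K)$ with an equivalent norm; the second limit says that for each fixed $\alpha\neq 0$ and each sequence $\eps_j\to 0$, $x_j\in\partial K$, the functionals $f\mapsto\nu_{\alpha,x_j,\eps_j}(f)$ tend to $0$ pointwise on the Banach space $\CC(K)$, whence by Banach--Steinhaus $\sup_j|\nu_{\alpha,x_j,\eps_j}|(K)<\infty$. So the \lqq genuine cancellation\rqq\ you hope to engineer for $|\alpha|>n$ cannot exist for measures of this kind; this is why your mollification argument keeps producing an error term ($\int(\tilde f-\tilde f*\phi_\delta)\,d\nu$, say) multiplied by an unbounded total variation, which no choice of mollification scale can beat uniformly over jets with slowly decaying moduli of continuity. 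A secondary but also fatal point: your dyadic telescoping has the scales backwards. By iterated LMI($1$), prescribing a degree-$s$ moment of size $\eps^s$ for a measure on $K\cap B(x,\delta)$ costs total variation of order $c_s(\eps/\delta)^s$, so high-degree corrections are cheap at \emph{coarse} scales and prohibitively expensive at fine ones; moreover, since the constants $c_s$ grow uncontrollably in $s$, no fixed geometric sequence of scales can work --- the scales must be chosen recursively against the $c_s$.

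The paper escapes this trap by not posing the moment problem on $K\cap B(x,\eps)$ at all. It solves the \emph{infinite} moment problem $\int y^\beta\varrho(y)^{-1}d\mu=\alpha!\,\delta_{\alpha\beta}$ (all $\beta\in\N_0^d$) on the blow-up $A_{x,\eps}=\eps^{-1}(K-x)\cup\{|y|\ge\eps^{-1}\}$: the unbounded part far from the origin, together with a radial weight $\varrho$ growing faster than every polynomial, is precisely what makes the high-degree moments cheap, and the radii at which $\varrho$ passes to the next power are chosen recursively (Lemma \ref{weight}) to absorb the constants $c_k$; existence of $\mu$ with \emph{weighted} total variation $\le 1$ then follows by duality and the bipolar theorem (Proposition \ref{momentproblem}). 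Pulling back to $K$ discards the part of $\mu$ on $\{|y|\ge\eps^{-1}\}$, so the measure actually living on $K$ satisfies the moment identities only approximately --- consistent with the obstruction above. The proposition is then proved by Taylor-expanding a compactly supported Whitney extension $F\in\CC^n$ only to its true order $n$: the exact moments on $A_{x,\eps}$ reduce both cases to $\eps^n\int\sum_{|\gamma|=n}(\partial^\gamma F(\xi)-\partial^\gamma F(x))\,y^\gamma/(\gamma!\,\varrho(y))\,d\mu$, which is $o(\eps^n)$ uniformly in $x$ and $\alpha$ by uniform continuity of $\partial^\gamma F$ on $\{|y|\le r\}$ and by $|y|^n/\varrho(y)\to 0$ on $\{|y|>r\}$. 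In this way $|\alpha|\le n$ and $|\alpha|>n$ are handled by one and the same computation, and the case you identified as the main obstacle never has to be confronted directly.
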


The main ingredient in the proof will be the solution of a suitable moment problem and in order to get uniform estimates for $x\in \partial K$ we will apply
scaling arguments. It is therefore convenient to consider the following \lqq blow-ups\rqq\ of $K$ with respect to a boundary point:
\[
 A_{x,\eps}= \eps^{-1} (K-x) \cup \{y\in\R^d: |y|\ge \eps^{-1}\}.
\]
The union with the complement of the large ball is necessary to solve the moment problem (remember that such problems behave quite differently on bounded and unbounded sets).
However, cutting off the part of the measures supported in $\{|y|\ge \eps^{-1}\}$ will have no influence on the properties required in theorem \ref{theorem 3}.

The moment problem is described in the following proposition:

\begin{pr}
\label{momentproblem}
Let $K\subseteq \R^d$ be a compact subset satisfying {\rm LMI($1$)}. Then there exists a continuous and radial function $\varrho:\R^d\to(0,\infty)$ with
$|y|^n =o(\varrho(y))$ for $|y|\to\infty$ and all $n\in\N$ such that for
each $x\in \partial K$, $\eps \in (0,1)$, and $\alpha\in \N_0^d$ there exists a finite regular Borel measure $\mu:=\mu_{\alpha,x,\eps}$ on $A_{x,\eps}$ with
total variation
$|\mu|(A_{x,\eps})\leq 1$ such that
\[
\int_{A_{x,\eps}}y^{\beta}\frac{1}{\varrho(y)}d\mu(y)= \case{\alpha !}{\beta=\alpha}{0}{\text{else}}.
\]
\end{pr}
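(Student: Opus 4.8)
The plan is to dualize the moment problem. Since $A_{x,\eps}$ is closed and unbounded, hence locally compact, and since a super-polynomially growing $\varrho$ makes every $g_\beta:=y^\beta/\varrho$ an element of $C_0(A_{x,\eps})$, the Riesz representation theorem identifies the finite regular Borel measures on $A_{x,\eps}$ with the dual of $C_0(A_{x,\eps})$, the total variation being the dual norm. First I would define a functional $L$ on $\operatorname{span}\{g_\beta:\beta\in\N_0^d\}$ by $L(g_\beta)=\alpha!\,\delta_{\alpha\beta}$; on a finite combination $\sum_\beta\lambda_\beta g_\beta=P/\varrho$ it takes the value $\partial^\alpha P(0)$. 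By Hahn--Banach, a measure $\mu$ with $|\mu|(A_{x,\eps})\le 1$ and the prescribed moments exists precisely when $\|L\|\le 1$, i.e. when
\[
|\partial^\alpha P(0)|\le\|P/\varrho\|_{A_{x,\eps}}\quad\text{for every polynomial }P .
\]
Everything thus reduces to this one weighted polynomial inequality, to be proved uniformly in $\alpha$, $x\in\partial K$ and $\eps\in(0,1)$; any fixed multiplicative constant $C$ in it can afterwards be removed by passing from $\varrho$ to $\varrho/C$, which is still radial, continuous and super-polynomial.

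To bring in LMI($1$) I would transport the inequality to $K$ through the blow-up. For the near part $\eps^{-1}(K-x)\subseteq A_{x,\eps}$, writing $Q(k)=P(\eps^{-1}(k-x))$ and applying the iterated LMI($1$) estimate at $x$ on balls $B(x,\delta)$ with $\delta<1$, the change of scale $R=\delta/\eps$ gives
\[
|\partial^\alpha P(0)|\le c_N\,R^{-|\alpha|}\sup\{|P(y)|:y\in\eps^{-1}(K-x),\ |y|\le R\}\qquad(0<R<\eps^{-1}),
\]
where $c_N$ depends only on $N=\deg P$ and $d$, the uniformity in $x,\eps$ being exactly what LMI($1$) provides. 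Bounding each value on the right by $\varrho(R)\,\|P/\varrho\|_{A_{x,\eps}}$ already yields the inequality, but with the degree-dependent constant $c_N$.

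The hard part is that the moment problem couples all degrees simultaneously whereas $c_N\to\infty$, so the near estimate alone cannot produce a degree-uniform bound. This is precisely the reason for adjoining the solid exterior $\{|y|\ge\eps^{-1}\}$ to $A_{x,\eps}$: there $|P(y)|\le\varrho(|y|)$ forces a polynomial growing like $|y|^N$ to stay small unless $\varrho(\eps^{-1})\gtrsim(\eps^{-1})^N$, so the exterior caps the degrees that can realize the full local Markov constant. The plan is therefore to construct a radial, increasing, continuous and super-polynomial $\varrho$ calibrated directly to the sequence $(c_N)$ and to split according to degree and scale: when the admissible range $R<\eps^{-1}$ is large enough for the near estimate to beat $c_N$ one invokes LMI($1$); for the remaining degrees one uses the exterior growth constraint, the super-polynomial growth of $\varrho$ dominating $|y|^N$ exactly where the near estimate runs out. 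Matching the two regimes at $|y|\approx\eps^{-1}$ should give $|\partial^\alpha P(0)|\le C\,\|P/\varrho\|_{A_{x,\eps}}$ with $C$ independent of $\alpha,x,\eps,N$.

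Finally, replacing $\varrho$ by $\varrho/C$ turns $C$ into $1$ and delivers a measure with $|\mu|(A_{x,\eps})\le 1$ realizing the exact moments, uniformly in $x$ and $\eps$ because all constants originate from the ($x,\eps$-uniform) LMI($1$) and the single fixed weight. I expect the one genuine difficulty to be this degree-uniform weighted Markov inequality: choosing the growth of $\varrho$ so that the blow-up of the local Markov constants $c_N$ is exactly offset by the exterior growth condition.
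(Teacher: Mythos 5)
Your duality reduction is essentially the paper's own: the paper pairs $\varphi(\N_0^d)$ with $\omega(\N_0^d)$ and uses the bipolar theorem plus Riesz representation where you use Hahn--Banach plus Riesz, and your target inequality $|\partial^\alpha P(0)|\le \|P/\varrho\|_{A_{x,\eps}}$ is exactly the content of the paper's Lemma \ref{weight}. Your near-part estimate (iterated LMI($1$) after blow-up, valid for $R<\eps^{-1}$) is likewise the first half of the paper's Lemma \ref{bound}. Up to this point the proposal is sound, and you have correctly identified where the real work lies.

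The gap is in the step you yourself flag as the remaining difficulty, and the mechanism you sketch for it would fail. Consider a fixed low-order derivative, say $|\alpha|=1$, a fixed $\eps$, and polynomials of degree $N\to\infty$. The near estimate gives $|\nabla P(0)|\le c_N R^{-1}\varrho(R)\,\|P/\varrho\|_{A_{x,\eps}}$ only for $R<\eps^{-1}$; since $R$ is capped and $\varrho\ge\varrho(0)>0$, the factor $c_N$ cannot be absorbed, no matter how $\varrho$ is calibrated. The exterior constraint does not rescue this: any Markov-type inequality through the shell $\{|y|\ge\eps^{-1}\}$ (such as the paper's Lemma \ref{rhobigger2}) again carries degree-dependent constants, and it contributes a factor $\varrho(r)r^{-|\alpha|}$ which tends to infinity with $r$ for fixed small $|\alpha|$, precisely because $\varrho$ must grow super-polynomially. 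So no single radius, and no matching of the two regimes at $|y|\approx\eps^{-1}$, yields a degree-uniform bound on the low-order coefficients of high-degree polynomials; the two regimes together control only the \emph{top-degree} coefficients (this is what Lemma \ref{bound} achieves, uniformly in all $r>0$). What is genuinely needed --- and what the paper's proof of Lemma \ref{weight} supplies --- is a recursion over the degree: $\varrho$ is built piecewise, $\varrho(y)=|y|^{k-1}$ on $R_{k-1}<|y|\le R_k$, with $R_k$ chosen \emph{after} the Markov constant $C_k$ is known; for $P\in\P_k$ one first bounds the top homogeneous part $P_k$ via Lemma \ref{bound} at $r=R_k$, then shows that $P-P_k$ satisfies the weighted bound on $\{|y|\le R_{k-1}\}$ up to a factor $1+\delta$, and concludes by an induction hypothesis which has been strengthened, via a compactness argument in the finite-dimensional space $\P_{k-1}$, to tolerate that slack. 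Without this subtract-the-top-and-recurse structure (or an equivalent substitute) your calibration of $\varrho$ to $(c_N)$ cannot close, so the proposal as it stands does not prove the proposition.
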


Before proving this let us show how to obtain Proposition \ref{measures}.

\begin{proof}[Proof of Proposition \ref{measures}]

We assume without loss of generality that $K \subseteq B(0,\frac{1}{4})$.  For $x\in \partial K$, $\eps \in (0,1)$, $\alpha\in \N_0^d$, and $f\in \CC(K)$  we define
$$
\nu_{\alpha,x,\eps}(f)=\int_{\eps^{-1}(K-x)}f(\eps y+x)\frac{1}{\varrho(y)}d\mu_{\alpha,x,\eps}(y),
$$
with $\mu_{\alpha,x,\eps}$ from proposition \ref{momentproblem}.
For each $f\in \CC(\R^d)$ with support in $B(0,3/4)$ we then have
\[
 \nu_{\alpha,x,\eps}(f|_K)= \int_{A_{x,\eps}}f(\eps y+x)\frac{1}{\varrho(y)}d\mu_{\alpha,x,\eps}(y)
\]
since  $|x+\eps y|> 3/4$ whenever $|y|> 1/\eps$. 

Mulitplying with a cut-off function we may assume that Whitney's extension operator $E_n:\EE^n(K)\to \CC^n(\R^d)$ 
has values in the space of $\CC^n$-functions with support in $B(0,3/4)$.
For $f\in \EE^n(K)$ we denote by $F=E_n(f)$ an extension of $f$ and obtain from  
Taylor's theorem and the condition on the moments of $\mu_{\alpha,x,\eps}$
$$
| \nu_{\alpha,x,\eps}(f^{(0)})-\eps^{|\alpha|}f^{(\alpha)}(x)|=
\left|\int_{A_{x,\eps}}F(\eps y+x)\frac{1}{\varrho(y)}d\mu_{\alpha,x,\eps}(y)-\eps^{|\alpha|}\partial^{\alpha}F(x)\right|=
$$
$$
=\left|\int_{A_{x,\eps}}\left(\sum_{|\gamma|<n}\frac{\partial^\gamma F(x)}{\gamma!}\eps^{|\gamma|}y^{\gamma}+\eps^n\sum_{|\gamma|=n}
    \frac{\partial^\gamma F(\xi)}{\gamma!}\eps^ny^{\gamma}\right)\frac{1}{\varrho(y)}d\mu_{\alpha,x,\eps}(y)\right.
$$
$$
\quad \left.-\eps^{|\alpha|}\partial^{\alpha}F(x)\right|=
  \eps^n\left|\int_{A_{x,\eps}}\sum_{|\gamma|=n}\left(\frac{\partial^\gamma F(\xi)}{\gamma!}-\frac{\partial^\gamma F(x)}{\gamma!}\right)
   \frac{y^\gamma}{\varrho(y)}d\mu_{\alpha,x,\eps}(y)\right|
$$
with $\xi:=\xi(x,\eps,y,f)\in [x,x+\eps y]$. We split the integral into the parts over $A_{x,\eps} \cap \{|y|\le r\}$ and $A_{x,\eps} \cap \{|y|> r\}$.
The first integral then becomes small (for $\eps \to 0$ and each fixed $r$) because of the uniform continuity of $\partial^\gamma F$ and the second becomes small 
(for $r\to\infty$ uniformly in $\eps$) because of the boundedness of
$\partial^\gamma F$, $|\mu_{\alpha,x,\eps}|(A_{x,\eps}) \le 1$, and $|y|^{|\gamma|}/\varrho(y) \to 0$ for $|y|\to\infty$.

For $|\alpha|>n$ we  compute similarly
$$
\sup_{|\alpha|>n,x\in \partial K}\frac{|\nu_{\alpha,x,\eps}(f^{(0)})|}{\eps^n}
=\sup_{|\alpha|>n,x\in \partial K}\left|\frac{1}{\eps^n}\int_{A_{x,\eps}}F(\eps y+x)\frac{1}{\varrho(y)}d\mu_{\alpha,x,\eps}(y)\right|
$$
$$
=\sup_{|\alpha|>n,x\in \partial K}\left|\int_{A_{x,\eps}}\sum_{|\gamma|=n}\left(\frac{\partial^\gamma F(\xi)}{\gamma!}-
\frac{\partial^\gamma F(x)}{\gamma!}\right)\frac{y^{\gamma}}{\varrho(y)}d\mu_{\alpha,x,\eps}(y)\right|,
$$
with $\xi:=\xi(x,\eps,y,f)\in [x,x+\eps y]$. By the same arguments as above we get
$$
\lim_{\eps\to 0}\sup_{|\alpha|>n,x\in \partial K}\frac{|\nu_{\alpha,x,\eps}(f^{(0)})|}{\eps^n}=0.
$$

\end{proof}

The proof of proposition \ref{momentproblem} will use duality where an improved version of 
LMI($1$) implies a suitable continuity estimate. For this improvement we first show a very simple lemma about the Markov inequality for balls.
We denote by $\P_k$ the space of polynomials of degree less or equal $k$.

\begin{lem}
\label{rhobigger2}
For each $k\in\N$ there is $C_k>0$ such that for all $Q\in\P_k$ and all $\varrho\geq 2$ we have
$$
\sum_{|\alpha|\leq k}\frac{|\partial^{\alpha}Q(x)|}{\alpha!}\varrho^{|\alpha|}\leq C_k \sup_{1\leq |y-x|\leq \varrho}|Q(y)|.
$$
\end{lem}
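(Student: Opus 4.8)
The plan is to strip away both the base point $x$ and the parameter $\varrho$ by elementary transformations, reducing everything to a norm-equivalence statement on a single fixed annulus. First I would translate so that $x=0$; since every quantity appearing is translation invariant, this costs nothing. Writing $Q(y)=\sum_{|\alpha|\le k}a_\alpha y^\alpha$, Taylor's formula identifies the coefficients as $a_\alpha=\partial^\alpha Q(0)/\alpha!$, so the left-hand side is exactly the weighted coefficient sum $\sum_{|\alpha|\le k}|a_\alpha|\varrho^{|\alpha|}$.

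Next I would rescale. Put $\tilde Q(z)=Q(\varrho z)$, a polynomial of degree $\le k$ whose coefficient of $z^\alpha$ is $b_\alpha:=a_\alpha\varrho^{|\alpha|}$. Hence the left-hand side equals the plain $\ell^1$-norm $\sum_{|\alpha|\le k}|b_\alpha|$ of the coefficient vector $(b_\alpha)$ of $\tilde Q$. Under the change of variables $z=y/\varrho$ the annulus $\{1\le|y|\le\varrho\}$ becomes $\{1/\varrho\le|z|\le1\}$ and $\sup_{1\le|y|\le\varrho}|Q(y)|=\sup_{1/\varrho\le|z|\le1}|\tilde Q(z)|$. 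The hypothesis $\varrho\ge2$ now enters in exactly one place: it gives $1/\varrho\le1/2$, so the rescaled annulus contains the fixed annulus $A=\{z\in\R^d:1/2\le|z|\le1\}$, whence $\sup_{1/\varrho\le|z|\le1}|\tilde Q|\ge\sup_A|\tilde Q|$.

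After these reductions it suffices to find, for each $k$, a constant $C_k$ with $\sum_{|\alpha|\le k}|b_\alpha|\le C_k\sup_A|P|$ for every $P\in\P_k$ (applied to $P=\tilde Q$). This is pure finite-dimensional linear algebra: on the finite-dimensional space $\P_k$ the coefficient $\ell^1$-norm and the sup-norm over $A$ are two norms. The only thing to verify is that $P\mapsto\sup_A|P|$ is genuinely a norm and not merely a seminorm, i.e.\ that a polynomial of degree $\le k$ vanishing on the annulus $A$ vanishes identically; this holds because $A$ has nonempty interior. Equivalence of all norms on $\P_k$ then yields the desired $C_k$, depending only on $k$ (and $d$) and not on $Q$, $x$, or $\varrho$.

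I do not expect a serious obstacle: the lemma is genuinely elementary once one sees that the scaling $y\mapsto\varrho z$ converts the $\varrho$-dependent weights into honest coefficients, and that $\varrho\ge2$ is used only to trap a fixed annulus inside the rescaled one. The only care needed is in checking that $\sup_A|\cdot|$ is a norm and that the constant produced by norm-equivalence is uniform in $\varrho$ — both of which follow precisely because, after rescaling, $A$ is a single compact set independent of all the parameters.
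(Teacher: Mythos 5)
Your proof is correct and follows essentially the same route as the paper: reduce to $x=0$, use a dilation to absorb the factor $\varrho^{|\alpha|}$ into the coefficients, and invoke equivalence of norms on the finite-dimensional space $\P_k$, with $\varrho\geq 2$ guaranteeing that a fixed annulus sits inside the rescaled one. The only (immaterial) difference is the normalization — you rescale by $\varrho$ so the fixed annulus is $\{1/2\leq|z|\leq 1\}$, while the paper rescales by $\varrho/2$ and works with $\{1\leq|y|\leq 2\}$ — and you additionally spell out why the sup over the annulus is a genuine norm, which the paper leaves implicit.
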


\begin{proof}
It is enough to show the statement for $x=0$. Since both sides of the inequality are norms on the finite dimensional space $\P_k$, there are constants $C_k$ such that the inequality
holds for $\varrho=2$. For $\varrho>2$ we denote $Q_\varrho(x)=Q(\sfrac{\varrho}{2} x)$ and obtain
\begin{align*}
\sum_{|\alpha|\leq k}\left|\frac{\partial^{\alpha}Q(0)}{\alpha!}\right|\varrho^{|\alpha|}
& =\sum_{|\alpha|\leq k}\left|\frac{\partial^{\alpha}Q_\varrho(0)}{\alpha!}\right|2^{|\alpha|}
\leq C_k\sup_{1\leq |y|\leq 2}|Q_\varrho(y))|\\
& =
C_k\sup_{\frac{\varrho}{2}\leq|y|\leq \varrho}|Q(y)|\leq C_k\sup_{1\leq|y|\leq \varrho}|Q(y)|.
\end{align*}
\end{proof}

The next lemma gives a uniform Markov inequality for the highest degree derivatives.

\begin{lem}
\label{bound}
Let $K\subseteq \R^d$ be a compact subset satisfying {\rm LMI($1$)}. For all $k\in\N$ there is $C_k> 0$ such that for all $\eps \in (0,1)$, $x\in \partial K$, 
$r> 0$ and $P\in \P_k$ we have
$$
\sum_{|\alpha|= k}\left|\frac{\partial^{\alpha}P(0)}{\alpha!}\right|\leq\frac{C_k}{r^k} \sup\{|P(y)|:y\in A_{x,\eps},|y|\leq r\}.
$$
\end{lem}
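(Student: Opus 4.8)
The plan is to transfer LMI($1$) from $K$ to the blow-up $A_{x,\eps}$ by a change of variables, and then to split the argument according to whether the radius $r$ is smaller or larger than $\eps^{-1}$. First I would record the iterated form of LMI($1$) already noted after its definition, namely
\[
|\partial^\alpha P(x_0)| \le c_k\,\delta^{-|\alpha|}\,\|P\|_{B(x_0,\delta)\cap K}
\qquad (x_0\in K,\ \delta\in(0,1),\ \deg P\le k),
\]
where we may assume $\eps_0=1$. Applying this to the polynomial $\tilde P(z)=P(\eps^{-1}(z-x))$ at the point $z_0=x+\eps y_0\in K$ and with $\delta=\eps s$, the chain-rule factor $\eps^{-|\alpha|}$ appears on both sides and cancels, leaving the scale-invariant estimate
\[
|\partial^\alpha P(y_0)| \le c_k\, s^{-|\alpha|}\sup\{|P(y)| : y\in\eps^{-1}(K-x),\ |y-y_0|\le s\},
\]
valid for every $y_0\in\eps^{-1}(K-x)$ and every $s\in(0,\eps^{-1})$. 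Since $x\in\partial K\subseteq K$ we may take $y_0=0$; moreover, for $s<\eps^{-1}$ the ball $B(0,s)$ avoids $\{|y|\ge\eps^{-1}\}$, so the supremum is unchanged if taken over $A_{x,\eps}\cap B(0,s)$. This displayed inequality is the main tool.

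For $r<\eps^{-1}$ I would simply insert $s=r$ and $|\alpha|=k$ into it; summing the finitely many terms with $|\alpha|=k$ and using $\alpha!\ge 1$ gives the claim directly, with the supremum over $A_{x,\eps}\cap B(0,r)$. For $r\ge 2\eps^{-1}$ I would instead exploit the annular part $\{\eps^{-1}\le|y|\le r\}\subseteq A_{x,\eps}\cap B(0,r)$: after the substitution $w=\eps y$ it becomes $\{1\le|w|\le r\eps\}$ with $r\eps\ge 2$, so Lemma \ref{rhobigger2} applied to $Q(w)=P(\eps^{-1}w)$ at the origin with $\varrho=r\eps$ bounds $\sum_{|\alpha|=k}\frac{|\partial^\alpha Q(0)|}{\alpha!}(r\eps)^{|\alpha|}$ by $C_k\sup_{1\le|w|\le r\eps}|Q(w)|=C_k\sup_{\eps^{-1}\le|y|\le r}|P(y)|$. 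Because $\partial^\alpha Q(0)=\eps^{-|\alpha|}\partial^\alpha P(0)$, the two powers of $\eps$ combine and the left-hand side equals exactly $r^k\sum_{|\alpha|=k}\frac{|\partial^\alpha P(0)|}{\alpha!}$, which yields the desired bound.

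The remaining transition range $\eps^{-1}\le r<2\eps^{-1}$ I would absorb into the first case by taking $s=\eps^{-1}/2<\eps^{-1}$: the resulting factor $(\eps^{-1}/2)^{-k}=(2\eps)^k$ is at most $4^k r^{-k}$ because $\eps<2/r$ in this range, and the supremum over $B(0,\eps^{-1}/2)$ is dominated by the one over $B(0,r)$. Taking $C_k$ to be the maximum of the three constants gives the uniform conclusion. The point requiring most care is the bookkeeping of constants, so that they depend on $k$ (and $d$) alone and not on $x$, $\eps$, or $r$; the genuine geometry is entirely contained in the scaled LMI near $0$ and in Lemma \ref{rhobigger2} for the annulus, and the only genuinely delicate step is covering the gap $1\le r\eps<2$, where neither tool applies cleanly and one must fall back on the scaled LMI with a slightly reduced radius.
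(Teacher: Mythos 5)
Your proof is correct and follows essentially the same route as the paper: a scaled LMI($1$) estimate at the blown-up origin for small radii (exploiting that the exponent $1$ makes the powers of $\eps$ cancel), and Lemma \ref{rhobigger2} applied to $Q(w)=P(\eps^{-1}w)$ on the annulus $\{\eps^{-1}\le |y|\le r\}$ for large radii. The only difference is that the paper normalizes $\eps_0=2$ in the LMI definition, so its two cases $r\le 2/\eps$ and $r\ge 2/\eps$ meet with no gap, which makes your third (transition-range) case unnecessary.
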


\begin{proof}
We may assume $\eps_0=2$ in the definition of the Markov inequality and take the sequence $c_k$ from there.
Fix $k\in\N$, $x\in \partial K$, $0<\eps\leq 1$, $P\in \P_k$, and $0\leq r\leq\frac2\eps$. Set $Q(y):=P(\frac{1}{\eps}(y-x))$. 
For suitable constants $C_k$(depending on $c_k$ and $k$) we get
\begin{align*}
\sum_{|\alpha|= k}\left|\frac{\partial^{\alpha}P(0)}{\alpha!}\right|r^{k}
& =  \sum_{|\alpha|= k}\left|\frac{\partial^{\alpha}Q(x)}{\alpha!}\right|r^{k}\eps^k
\leq C_k\sup_{y\in B(x,r\eps)\cap K}|Q(y)| \\
& =  C_k \sup\left\{|P(z)|:\ z\in \eps^{-1} \left(K-x\right)\cap B(0,r)\right\}
\end{align*}
If on the other hand $r\geq \frac{2}{\eps}$ we set $\varrho=r\eps$ and we get from lemma \ref{rhobigger2} (with different constants)
\begin{align*}
& \sum_{|\alpha|= k}\left|\frac{\partial^{\alpha}P(0)}{\alpha!}\right|r^{k} 
 \le \sum_{|\alpha|\le k}\left|\frac{\partial^{\alpha}Q(x)}{\alpha!}\right|\varrho^k 
 \le C_k \{\sup |Q(y)|:1\leq |y-x|\leq \varrho \}
\\ & = 
C_k \sup\{|P(z)|:\eps^{-1} \leq |z| \leq r\} \le C_k \sup \{|P(z)|: z\in A_{\eps,x},|z|\leq r\}.
\end{align*}
\end{proof}

The following lemma is the main technical tool to solve the moment problem in proposition \ref{momentproblem}:

\begin{lem}
\label{weight}
Let $K\subseteq \R^d$ be a compact subset satisfying {\rm LMI($1$)}. For each family $(\eps_\alpha)_{\alpha\in\N_0^d}$ of positive numbers 
there is a continuous and radial function $\varrho:\R^d\to (0,\infty)$  with $|y|^k=o(\varrho(y))$ for $y\to\infty$ 
and each $k\in\N$ such that for all $x\in\partial K$, $\eps\in (0,1)$, and all polynomials $P$ we have
$$
\sup_{y\in A_{x,\eps}}\frac{|P(y)|}{\varrho(y)}\leq 1
\quad \Longrightarrow \quad
\frac{|\partial^{\alpha}P(0)|}{\alpha!}<\eps_\alpha \text{ for all } \alpha\in\N_0^d.
$$
\end{lem}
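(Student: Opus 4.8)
\emph{The plan.} Write $c_\alpha(P)=\partial^\alpha P(0)/\alpha!$ for the Taylor coefficients of $P$ at $0$. The asserted implication is equivalent to a uniform bound on the ``coefficient functionals'': for each $\alpha$, and uniformly in $x\in\partial K$ and $\eps\in(0,1)$,
\[
\inf\Big\{\sup_{y\in A_{x,\eps}}\frac{|P(y)|}{\varrho(y)}\ :\ c_\alpha(P)=1\Big\}\ \ge\ \frac1{\eps_\alpha},
\]
the infimum being over all polynomials with normalised $\alpha$-coefficient (one passes from $P$ to $P/c_\alpha(P)$). So I would aim to \emph{construct} the radial profile $\varrho$ so that all these infima are large simultaneously. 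The two tools are Lemma \ref{bound}, which through LMI($1$) bounds the \emph{top-order} coefficients of an arbitrary polynomial by its supremum over $A_{x,\eps}\cap B(0,r)$ with a constant depending only on the order and valid at \emph{every} scale $r>0$ (hence uniformly in $x,\eps$), and Lemma \ref{rhobigger2} applied on the exterior ball contained in $A_{x,\eps}$. Throughout let $M(r)=\sup_{|y|\le r}\varrho(y)$.

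The top-order coefficients are immediate. If $c_\alpha(P)=1$ with $|\alpha|=\deg P=k$, then Lemma \ref{bound} gives $1\le \sum_{|\alpha|=k}|c_\alpha(P)|\le C_k r^{-k}\sup_{A_{x,\eps}\cap B(0,r)}|P|$, so there is a point $y\in A_{x,\eps}$, $|y|\le r$, with $|P(y)|\gtrsim r^k/C_k$; dividing by $\varrho(y)\le M(r)$ and optimising over $r$ shows $\sup_{A_{x,\eps}}|P|/\varrho\gtrsim \sup_{r>0} r^k/(C_kM(r))$. Hence the top-order estimate holds as soon as $\varrho$ dips, at some radius $r_k$, below $\eps_\alpha r_k^k/C_k$. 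As $C_k$ grows with $k$, this already dictates that $\varrho$ be calibrated along a sequence of radii tending to infinity.

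The lower orders are the difficulty, and I expect this to be the main obstacle. For $|\alpha|=j<\deg P$ one cannot feed $P$ to Lemma \ref{bound}; applying it to the truncation $P_{(j)}$ of degree $\le j$ introduces the error $\sup_{B(0,r)}|P-P_{(j)}|$, i.e.\ the contribution of the higher-degree part, and one is led to a triangular system coupling $c_\alpha(P)$ to the higher coefficient sums $\sum_{|\beta|=l}|c_\beta(P)|$. Because the Markov constants blow up with the order while the degree of $P$ is \emph{unbounded}, a naive top-down recursion does not close. What should save the argument is that a polynomial carrying a coefficient of fixed size yet staying small on the fat set $A_{x,\eps}\cap B(0,r)$ is forced to be of high degree and strongly oscillatory there; by a Bernstein--Walsh/Remez-type growth estimate on the \emph{uniformly} fat blow-ups $A_{x,\eps}\cap B(0,r)$---the uniform fatness being exactly what LMI($1$) supplies at every base point and every scale---such a polynomial must grow super-polynomially just outside $B(0,r)$ and therefore exceed $\varrho$ at an accessible point of the exterior part of $A_{x,\eps}$. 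Turning this heuristic into a quantitative estimate that is uniform in the degree and in $(x,\eps)$ is the heart of the matter.

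Granting such a growth estimate, I would then build $\varrho$ as a continuous, non-decreasing, radial profile, prescribing its values at a sequence of radii $r_k\uparrow\infty$ by a downward induction on the order: at scale $r_k$ the coefficient of order $k$ is forced small, while the rapid growth of $\varrho$ between consecutive scales damps the tail contribution of the higher---hence, inductively, already controlled---orders. The calibration must be arranged so that $|y|^n=o(\varrho(y))$ nevertheless survives; this is compatible because the inductive constraints only bound $\varrho$ \emph{from above} at the chosen radii by quantities of the form $\eps_\alpha r^{|\alpha|}/C_{|\alpha|}$, which themselves grow super-polynomially. In short, the uniformity over the unbounded degree and over all $x\in\partial K$, $\eps\in(0,1)$ is achieved by playing the local LMI-Markov control near the origin against the super-polynomial growth of $\varrho$ at infinity, using that every $A_{x,\eps}$ contains both a uniformly fat copy of $K$ near $0$ and an exterior ball.
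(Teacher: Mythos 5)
Your reduction and your treatment of the top-order coefficients via Lemma \ref{bound} coincide with the paper's first step, but the proposal is not a proof: everything after ``Granting such a growth estimate'' rests on a Bernstein--Walsh/Remez-type bound, uniform in the degree and in $(x,\eps)$, which you never establish and which you yourself flag as ``the heart of the matter''. In $\R^d$ such an estimate would require uniform control of Siciak--Green extremal functions (equivalently, capacity-type lower bounds) for all the sets $A_{x,\eps}\cap B(0,r)$, uniformly in $x\in\partial K$, $\eps\in(0,1)$, $r>0$ and in $\deg P$; nothing of the sort is quoted or derived from LMI($1$) in your sketch. So the decisive step is missing, and the final calibration of $\varrho$ is built on it.

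Moreover, the diagnosis that forces you toward that machinery --- that ``a naive top-down recursion does not close'' because the Markov constants blow up with the order while $\deg P$ is unbounded --- is incorrect, and seeing why it is incorrect supplies exactly the idea you are missing. The paper runs an induction on the degree class $\P_k$, choosing radii $R_k\uparrow\infty$ and setting $\varrho(y)=|y|^{k-1}$ on the annulus $R_{k-1}<|y|\le R_k$; the inductive statement is that $\sup_{y\in A_{x,\eps},\,|y|\le R_k}|P(y)|/\varrho(y)\le 1$ forces $|\partial^\alpha P(0)|/\alpha!<\eps_\alpha$ for all $P\in\P_k$ and $|\alpha|\le k$. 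Three points make this close. First, $R_k$ is chosen \emph{after} the constant $C_k$ of Lemma \ref{bound} is known, so the blow-up of $C_k$ in $k$ is harmless: one simply takes $R_k>C_k/\min\{\eps_\alpha:|\alpha|=k\}$, which settles the coefficients of order exactly $k$. Second, the lower-order coefficients are handled by subtracting the top homogeneous part $P_k$ and applying the inductive hypothesis to $P-P_k\in\P_{k-1}$ \emph{on the smaller ball} $|y|\le R_{k-1}$; there the error contributed by $P_k$ is at most $\frac{R_{k-1}^k}{\varrho(0)}\sum_{|\alpha|=k}|\partial^\alpha P(0)|/\alpha!\le \frac{R_{k-1}^k\,C_k}{\varrho(0)\,R_k}$, which is made smaller than any prescribed $\delta>0$ by enlarging $R_k$ once more. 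This is precisely what decouples your ``triangular system'': the coupling of a low-order coefficient to the top ones is damped scale by scale by the gap between $R_{k-1}$ and $R_k$, with no degree-uniform growth estimate anywhere. Third, the slack $\delta$ is legitimate because a compactness argument on the unit ball of the finite-dimensional space $\P_{k-1}$ shows the inductive implication survives with threshold $1+\delta$ in place of $1$. Since every polynomial has some finite degree $k$ (its coefficients of order $>k$ vanish trivially), the lemma for all polynomials follows, and $|y|^n=o(\varrho(y))$ holds because the exponent $k-1$ tends to infinity along the annuli.
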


\begin{proof}
Let $(\eps_\alpha)_{\alpha\in\N_0^d}$ be given. For a suitable increasing sequence $R_k\to\infty$ we define a radial weight function $\varrho:\R^d\to (0,\infty)$  by
$\varrho(0)= \eps_0/2$ and
\[
 \varrho(x)= |x|^{k-1} \text{ for } R_{k-1} < |x|\le R_k.
\]
We will construct the sequence $R_k$ so that $\varrho$ satisfies the assertion of the lemma. Afterwards it is easy to find a continuous modification.

Proceeding recursively, we will show that $(R_k)_k$ can be found 
such that for all polynomials $P \in \P_k$, $x\in \partial K$, and $\eps \in (0,1)$ we have
\[
\sup_{y\in A_{x,\eps},|y|\leq R_k}\frac{|P(y)|}{\varrho(y)}\leq 1
\quad \Longrightarrow \quad
\frac{|\partial^{\alpha}P(0)|}{\alpha!}< \eps_\alpha
\]
for all $|\alpha|\leq k$. Since $\varrho(0)<\eps_0$ this is true for $k=0$. We assume that $R_0,\ldots,R_{k-1}$ are constructed.

By a compactness argument (for the unit ball of the finite dimensional space $\P_{k-1}$) we find $\delta>0$  such that
\[
\sup_{y\in A_{x,\eps},|y|\leq R_{k-1}}\frac{|P(y)|}{\varrho(y)}\leq 1 +\delta
\quad \Longrightarrow \quad
\frac{|\partial^{\alpha}P(0)|}{\alpha!}< \eps_\alpha
\]
for all $|\alpha|\leq k-1$ and $P\in \P_{k-1}$. 
From Lemma \ref{bound} we get $C_k>0$ such that, for all $P\in \P_k$, $x\in \partial K$, $\eps \in (0,1)$,
and $R>0$
\begin{equation}
\label{four4}
\sum_{|\alpha|=k}\frac{|\partial^{\alpha}P(0)|}{\alpha!}\leq \frac{C_k}{R}\sup_{y\in A_{x,\eps},|y|\leq R}\frac{|P(y)|}{R^{k-1}}.
\end{equation}
We choose $R_k>R_{k-1}$ such that
$$
\sfrac{C_k}{R_k}<\min\{\eps_\alpha:|\alpha|=k\}
\text{ and }
\sfrac{C_k}{R_k}< \sfrac{\delta\eps_0}{2R_{k-1}^{k}}.
$$
If now $P\in\P_k$ satisfies
$$
\sup_{y\in A_{x,\eps},|y|\leq R_{k}}\frac{|P(y)|}{\varrho(y)}\leq 1
$$
we apply (\ref{four4}) for $R=R_k$. Since $\varrho(y)$ is increasing we get for $|\alpha|=k$
$$
\frac{|\partial^{\alpha}P(0)|}{\alpha!}\leq\frac{C_k}{R_k}\sup_{y\in A_{x,\eps},|y|\leq R_k}\frac{|P(y)|}{R_{k}^{k-1}}<
\eps_\alpha\sup_{y\in A_{x,\eps},|y|\leq R_k}\frac{|P(y)|}{\varrho(y)}\leq\eps_\alpha.
$$

Let $P_k$ be the homogeneous part of $P$ of degree $k$.  Using $\varrho(y)\leq R_{k}^{k-1}$
for $|y|\leq R_k$ and again (\ref{four4}) for $R=R_k$  we obtain
$$
\sup_{y\in A_{x,\eps},|y|\leq R_{k-1}}\frac{|(P-P_k)(y)|}{\varrho(y)}\leq \sup_{y\in A_{x,\eps},|y|\leq R_{k}}\frac{|P(y)|}{\varrho(y)}+
\sup_{y\in A_{x,\eps},|y|\leq R_{k-1}}\frac{|P_k(y)|}{\varrho(0)}
$$
$$
\leq1+\frac{2}{\eps_0}\sum_{|\alpha|=k}\frac{|\partial^{\alpha}P(0)|}{\alpha!}R_{k-1}^{k}\leq 1 +
   \frac{2R_{k-1}^{k}}{\eps_0}\frac{C_k}{R_k}\sup_{y\in A_{x,\eps},|y|\leq R_{k}}\frac{|P(y)|}{R_{k}^{k-1}}
$$
$$
\leq  1+\delta \sup_{y\in A_{x,\eps},|y|\leq R_{k}}\frac{|P(y)|}{\varrho(y)} \leq 1+\delta.
$$
Using the induction assumption we conlude
$$
\frac{|\partial^{\alpha}P(0)|}{\alpha!}=\frac{|\partial^{\alpha}(P-P_k)(0)|}{\alpha!}< \eps_\alpha \text{ for }|\alpha|< k.
$$
\end{proof}

\begin{proof}[Proof of proposition \ref{momentproblem}]
Consider the space $\varphi\left(\N_0^d\right)$ as the locally convex direct sum of $\N_0^d$ copies of $\C$. It has the finest locally convex topology and
it is the dual of the product $\omega\left(\N_0^d\right)$ with the duality 
$\<(x_\beta)_{\beta \in\N_0^d},(y_\beta)_{\beta\in\N_0^d}\>=\sum_\beta x_\beta y_\beta$.
The set
$$
L=\{(\alpha!\delta_{\alpha,\beta})_{\beta\in\N_0^d}:\ \alpha\in\N_0^d\}\subseteq \omega\left(\N_0^d\right)
$$
is compact in $\omega\left(\N_0^d\right)$ and thus, the polar $L^{\circ}$ is a 0-neighbourhood in $\varphi\left(\N_0^d\right)$. Let $(\eps_{\alpha})_{\alpha\in\N_0^d}$ 
be a family of positive numbers such that $U=\{(\lambda_\alpha)_{\alpha\in\N_0^d}\in \varphi(\N_0^d):\ |\lambda_\alpha|<\eps_{\alpha}\}$ 
is a $0$-neighbourhood in $\varphi(\N_0^d)$ with $U\subseteq L^{\circ}$. For the weight function $\varrho$ from
lemma \ref{weight}, $x\in K$ and $\eps \in (0,1)$  we consider the spaces
$$
C_{x,\eps}:=\left\{f\in \CC(A_{x,\eps}):\lim_{y\to\infty}\frac{|f(y)|}{\varrho(y)}=0\right\},
$$
equipped with the weighted sup-norm $\|f\|_{x,\eps}$ whose unit ball is denoted by $B_{x,\eps}$. 
In a functional analytic form lemma \ref{weight} says that
the continuous linear mapping
$$
T_{x,\eps}: \varphi\left(\N_0^n\right) \rightarrow  C_{x,\eps},\;
(\lambda_\alpha)_{\alpha\in \N_0^d} \mapsto \left(y \mapsto \sum_{\alpha\in \N_0^d}\lambda_{\alpha}y^{\alpha}\right)
$$
satisfies $T_{x,\eps}^{-1}(B_{x,\eps})\subseteq U$ for each $x\in K$ and each $\eps \in (0,1)$.
Since the map $C_{x,\eps}\to \CC_0(A_{x,\eps})$, $f\mapsto f/\varrho$ is an isometry  Riesz's representation theorem yields
$$
C_{x,\eps}'=\left\{\frac{1}{\varrho}\mu:\ \mu\mbox{ a regular finite Borel measure on }A_{x,\eps}\right\},
$$
and the unit ball of $C_{x,\eps}'$ is
$$
D_{x,\eps}:=\left\{\frac{1}{\varrho}\mu:\ |\mu|(A_{x,\eps})\leq 1\right\}.
$$
Since $D_{x,\eps}$ is weak*-compact and $T_{x,\eps}^t$ is 
weak*-weak* continuous the bipolar theorem yields
$$
T_{x,\eps}^{t}(D_{x,\eps})=\overline{T_{x,\eps}^{t}(D_{x,\eps})} =
\overline{T_{x,\eps}^{t}(B_{x,\eps}^{\circ})}=T_{x,\eps}^{-1}(B_{x,\eps})^{\circ}\supseteq U^{\circ}\supseteq L^{\circ\circ}\supseteq L.
$$
Since $z=(\alpha! \delta_{\alpha,\beta})_{\beta\in\N_0^d} \in L$ we find a measure $\mu=\mu_{\alpha,x,\eps}$ on $A_{x,\eps}$ with total variation
bounded by $1$ such that $z= T^t_{x,\eps} (\mu/\varrho)$. This implies the assertion of proposition \ref{momentproblem} since for the canonical unit vectors
$e_\gamma=(\delta_{\gamma,\beta})_{\beta\in\N_0^d}$
\[
 \< T^t_{x,\eps} (\mu/\varrho) , e_\gamma\> = \mu/\varrho ( T_{x,\eps} e_\gamma) = \int y^\gamma/\varrho(y)\, d\mu(y). 
\]
\end{proof}

\bibliographystyle{amsalpha}
\bibliography{extension2-lit}

\providecommand{\bysame}{\leavevmode\hbox to3em{\hrulefill}\thinspace}
\providecommand{\MR}{\relax\ifhmode\unskip\space\fi MR }
\providecommand{\MRhref}[2]{%
  \href{http://www.ams.org/mathscinet-getitem?mr=#1}{#2}
}
\providecommand{\href}[2]{#2}
\begin{thebibliography}{FJW11}

\bibitem[BM95]{BoMi}
Len~P. Bos and Pierre~D. Milman, \emph{Sobolev-{G}agliardo-{N}irenberg and
  {M}arkov type inequalities on subanalytic domains}, Geom. Funct. Anal.
  \textbf{5} (1995), no.~6, 853--923. \MR{1361514 (97e:46038)}

\bibitem[BM98]{BiMi}
Edward Bierstone and Pierre~D. Milman, \emph{Geometric and differential
  properties of subanalytic sets}, Ann. of Math. (2) \textbf{147} (1998),
  no.~3, 731--785. \MR{1637671 (2000c:32027)}

\bibitem[FJW11]{frerickjordawengenroth2010}
Leonhard Frerick, Enrique Jord{\'a}, and Jochen Wengenroth, \emph{Tame linear
  extension operators for smooth {W}hitney functions}, J. Funct. Anal.
  \textbf{261} (2011), no.~3, 591--603. \MR{2799572 (2012d:58007)}

\bibitem[FR12]{FeRi}
Charles Fefferman and Fulvio Ricci, \emph{Some examples of {$C\sp \infty$}
  extension by linear operators}, Rev. Mat. Iberoam. \textbf{28} (2012), no.~1,
  297--304. \MR{2904142}

\bibitem[Fre07]{Fr}
Leonhard Frerick, \emph{Extension operators for spaces of infinite
  differentiable {W}hitney jets}, J. Reine Angew. Math. \textbf{602} (2007),
  123--154. \MR{2300454}

\bibitem[JSW84]{JSW}
Alf Jonsson, Peter Sj{\"o}gren, and Hans Wallin, \emph{Hardy and {L}ipschitz
  spaces on subsets of {${\bf R}\sp n$}}, Studia Math. \textbf{80} (1984),
  no.~2, 141--166. \MR{781332 (87b:46022)}

\bibitem[Ple90]{Pl}
Wies{\l}aw Ple{\'s}niak, \emph{Markov's inequality and the existence of an
  extension operator for {$C\sp \infty$} functions}, J. Approx. Theory
  \textbf{61} (1990), no.~1, 106--117. \MR{1047152 (91h:46065)}

\bibitem[PP86]{PaPl1}
Wies{\l}aw Paw{\l}ucki and Wies{\l}aw Ple{\'s}niak, \emph{Markov's inequality
  and {$C\sp \infty$} functions on sets with polynomial cusps}, Math. Ann.
  \textbf{275} (1986), no.~3, 467--480. \MR{858290 (87k:32031)}

\bibitem[PP88]{PaPl2}
\bysame, \emph{Extension of {$C\sp \infty$} functions from sets with polynomial
  cusps}, Studia Math. \textbf{88} (1988), no.~3, 279--287. \MR{932017
  (89b:58026)}

\bibitem[Rog06]{Rogers}
Luke~G. Rogers, \emph{Degree-independent {S}obolev extension on locally uniform
  domains}, J. Funct. Anal. \textbf{235} (2006), no.~2, 619--665. \MR{2225465
  (2008j:46027)}

\bibitem[See64]{Seeley}
Robert~T. Seeley, \emph{Extension of {$C^{\infty }$} functions defined in a
  half space}, Proc. Amer. Math. Soc. \textbf{15} (1964), 625--626. \MR{0165392
  (29 \#2676)}

\bibitem[Ste70]{St}
Elias~M. Stein, \emph{Singular integrals and differentiability properties of
  functions}, Princeton Mathematical Series, No. 30, Princeton University
  Press, Princeton, N.J., 1970. \MR{0290095 (44 \#7280)}

\bibitem[Whi34]{Wh}
Hassler Whitney, \emph{Analytic extensions of differentiable functions defined
  in closed sets}, Trans. Amer. Math. Soc. \textbf{36} (1934), no.~1, 63--89.
  \MR{1501735}

\end{thebibliography}
\end{document}